\title[Rationality of the inner products of spherical $s$-distance $t$-designs for $t \geq 2s-2$, $s \geq 3$]{\bf Rationality of the inner products of spherical $s$-distance $t$-designs for $t \geq 2s-2$, $s \geq 3$}
\author[P. Boyvalenkov]{Peter Boyvalenkov$^\dagger$}
\address{Institute of Mathematics and Informatics, Bulgarian Academy of Sciences,
8 G Bonchev Str.,
1113  Sofia, Bulgaria}
\email{peter@math.bas.bg}
\thanks{\noindent $^\dagger$ The research of the first author was supported, in part, by Bulgarian NSF under project KP-06-N32/2-2019. }
\author[H. Nozaki]{Hiroshi Nozaki$^\ddag$}
\address{Aichi University of Education, Department of Mathematics Education
1 Hirosawa, Igaya-cho, Kariya-city, Aichi, 448-8542, Japan}
\email{hnozaki@auecc.aichi-edu.ac.jp}
\thanks{$^\ddag$ The second author is supported by JSPS KAKENHI Grant Numbers 18K03396, 19K03445 and 20K03527.}
\author[N. Safaei]{Navid Safaei$^{*}$}
\address{Research Institute of  Policy Making, Sharif University of Technology,
Tehran, Iran}
\email{navid\_safaei@gsme.sharif.edu}
\date{\today}
\newtheorem{theorem}{Theorem}[section]
\newtheorem{lemma}[theorem]{Lemma}
\newtheorem{corollary}[theorem]{Corollary}
\theoremstyle{definition}
\newtheorem{definition}[theorem]{Definition}
\newtheorem{remark}[theorem]{Remark}
\begin{document}
\maketitle

\begin{abstract}
We prove that the inner products of spherical $s$-distance $t$-designs with $t \geq 2s-2$ (Delsarte codes) and $s \geq 3$  are rational with the only exception being the icosahedron. In other formulations, we prove that all sharp configurations have rational inner products and all spherical codes which attain the Levenshtein bound, have rational inner products, except for the icosahedron. 
\end{abstract}

{\bf Keywords.} Spherical codes and designs, $s$-distance sets

{\bf MSC Codes.} 05B30

\section{Introduction}

Let $\mathbb{S}^{n-1}$ be the unit sphere in $\mathbb{R}^n$. A finite set $C \subset \mathbb{S}^{n-1}$ is called a spherical code. A special class of spherical codes, called {\it spherical designs}, was introduced by Delsarte, Goethals and Seidel in 1977 in a seminal paper \cite{DGS}.

\begin{definition} \label{def-designs} A spherical code $C \subset \mathbb{S}^{n-1}$ is called a spherical $t$-design if the quadrature 
formula
\[  \frac{1}{\mu(\mathbb{S}^{n-1})} \int_{\mathbb{S}^{n-1}} f(x) d\mu(x) =
                  \frac{1}{|C|} \sum_{x \in C} f(x) \]
is exact for all polynomials $f(x)= f(x_1,x_2,\ldots,x_n)$ of degree at most $t$. 
\end{definition}

For a spherical code $C$ we consider the set $A(C):=\{ \langle x,y \rangle : x,y \in C, x \neq y\}$ and denote by $s:=|A(C)|$ the number of distinct inner products of $C$. 

Designs with large $t$ and small $s$ are clearly interesting. 
Delsarte--Goethals--Seidel \cite{DGS} proved that $t \leq 2s$, 
and $t \leq 2s - 1$ if the set $A(C) \cup \{ 1 \}$ is symmetric 
with respect to 0 and discuss the cases of equality. On the other hand, 
it is shown in \cite{DGS} (see Theorem \ref{thm:DGS} below)
that $t \geq 2s - 2$ implies that $C$ 
carries an $s$-c1ass association scheme. 

Delsarte--Goethals--Seidel \cite{DGS} also proved the bound 
\begin{equation} \label{dgs-bound}
  |C| \geq {n+m-1-\varepsilon \choose n-1}+{n+m-2 \choose n-1}
\end{equation}
for any spherical $t$-design $C \subset \mathbb{S}^{n-1}$,
where $t=2m-\varepsilon$, $\varepsilon \in \{0,1\}$. 
A design is said to be {\it tight} if it attains \eqref{dgs-bound}.
Tight spherical designs were considered by Bannai--Damerell 
\cite{BD1,BD2} where it was proved that tight $2m$-designs
do not exist for $m \geq 3$ and tight $(2m-1)$-designs
do not exist for $m \geq 5$ except for the tight 11-design
defined by the Leech lattice in 24 dimensions. Further nonexistence 
results for tight 4-, 5-, and 7-designs were proved in \cite{BMV04,NV13}. 

Levenshtein \cite{Lev92} proved (in the more general setting of
{\it polynomial metric spaces}) that the codes with $t \geq 2s-1$ 
or even $t \geq 2s-2$ if the code is diametrical are maximal, that is
they attain what is now known as Levenshtein bound (see \cite{Lev-chapter}). 
Such codes were called {\it Delsarte codes} in polynomial metric spaces \cite{Lev92}.

Cohn--Kumar \cite{CK07} considered spherical codes with $t=2s-1$ or $t=2s-2$ which were called {\it sharp configurations} and appeared
to be {\it universally optimal} since they have 
the minimum possible energy for a large class of potential functions\footnote{More precisely, for all absolutely monotone potentials.}. 
Boyvalenkov--Dragnev--Hardin--Saff--Stoyanova \cite{BDHSS16} obtained an energy counterpart of the Levenshtein bound which is attained by all 
sharp configurations. 

In this paper we prove that the inner products of spherical $s$-distance $t$-designs with $t \geq 2s-2$ and $s \geq 3$ are rational with the only exception being the icosahedron. In other words, we prove that all sharp configurations have rational inner products and, still in other words, all spherical codes which attain the Levenshtein bound, have rational inner products, except for the icosahedron. 

The rationality problem was considered from the very beginning (see Theorem 7.7 in \cite{DGS}). Bannai--Damerell \cite{BD1,BD2} proved and applied the rationality of the inner products of tight spherical designs in order to prove the nonexistence results, mentioned above. The case $(t,s)=(3,2)$ was considered in \cite{BS04}. Note that there are spherical 2-distance 2-designs with irrational inner products, which are called conference graphs.
Rationality of inner products of antipodal $s$-distance sets of large cardinalities was proved in \cite{Noz}.  

The list of all known spherical $s$-distance $t$-designs with $t \geq 2s-2$ and $t \geq 3$ is unchanged since 1987 when Levenshtein \cite{Lev87} noticed that an infinite series of spherical $2$-distance 3-designs can be added to the examples from Delsarte--Goethals--Seidel \cite{DGS}. In particular, remarkable optimal codes are good kissing number configurations \cite{OS79}. 

The paper is organized as follows. In section 2 we use 
algebraic tools for proving the main result in the case $s \geq 6$ 
and derive an important corollary for the small cases. Section 3 is devoted to detailed investigation of the cases $s=3$, 4 and 5. In section 4 we present certain consequences and a different proof for the case $t=2s-1$
which works for $s \geq 2$.

\section{The case $s\geq 6$}

In this section we consider the cases $(s,t)=(s,2s-1)$ and $(s,2s-2)$ 
for $s\geq 6$. 
The spherical sets have the structures of  $Q$-polynomial association schemes \cite{DGS}. 
We will prove the rationality of inner products that appear in the spherical set by
using an automorphism of the Bose--Mesner algebra of the association scheme given from the Galois group of its splitting field  \cite{MW09} and Suzuki's result on multiple $Q$-polynomial structures \cite{S98}.  First, we give several definitions and known related results. 

Let $X$ be a finite set, and $\mathcal{R}=\{R_i\}_{i=0}^d$ be disjoint binary relations on $X$, where $R_0=\{(x,x) \mid x \in X\}$.  The pair $(X,\mathcal{R})$ is a (symmetric) {\it association scheme} of class $d$ if the following conditions hold:
\begin{enumerate}
    \item $X\times X=R_0\cup R_1 \cup \cdots \cup R_d$.
    \item For each $i\in \{0,1,\ldots, d\}$, if $(x,y) \in R_i$, then $(y,x) \in R_i$. 
    \item For any $i,j,k \in \{0,1,\ldots, d\}$, there exists an integer $p_{ij}^k$ such that for each $(x,y) \in R_k$ it follows    
    $p_{ij}^k=|\{z : (x,z) \in R_i, (z,y) \in R_j \}|$.
\end{enumerate}
Let $A_i$ be the symmetric matrix whose rows and columns are indexed by $X$ with $(x,y)$ entries
\[
(A_i)_{xy}=\begin{cases}
1 \text{ if $(x,y) \in R_i$},\\
0 \text{ otherwise}. 
\end{cases}
\]
The linear space spanned by $\{A_i\}_{i=0}^d$ over $\mathbb{C}$ is called the {\it Bose--Mesner algebra} of an association scheme, and it has two structures of commutative algebra with usual matrix multiplication and entrywise multiplication. There exist the primitive idempotents $\{E_i\}_{i=0}^d$ of the algebra with the usual multiplication. Namely, it satisfies that $E_i E_j=\delta_{ij} E_i$ with Kronecker's delta $\delta_{ij}$. The matrix $E_i$ is a positive semidefinite matrix with equal diagonal entries, and it is an orthogonal projection matrix onto a same eigenspace of $\{A_i\}_{i=0}^d$. 
An association scheme is {\it $Q$-polynomial} with respect to the ordering $E_0,E_1,\ldots, E_d$ (or with respect to $E_1$) if for each $i \in \{0,1,\ldots, d\}$, there exists a polynomial $v_i(x)$ of degree $i$ such that $E_i=v_i(E_1)$ where we use the entrywise multiplication. See \cite{BIb} for more details on association schemes. 

Suzuki \cite{S98} proved the following theorem on the multiple structures of $Q$-polynomial association schemes. 
\begin{theorem}\label{thm:suzuki}
Let $\mathcal{X}=(X,\{R_i\}_{i=0}^d)$ be a $Q$-polynomial association scheme with respect to the ordering $E_0,E_1,\ldots, E_d$ such that the rank of $E_1$ is greater than 2.  If $\mathcal{X}$ is $Q$-polynomial with respect to another ordering, then the new ordering is one of the following. 
\begin{enumerate}
    \item[(I)] $E_0,E_2,E_4,E_6,\ldots,E_5,E_3, E_1$.
    \item[(II)] $E_0,E_d,E_1,E_{d-1},E_2,E_{d-2},E_3,E_{d-3}, \ldots$
    \item[(III)] $E_0,E_d,E_2,E_{d-2},E_4,E_{d-4}, \ldots E_{d-5},E_5,E_{d-3},E_3,E_{d-1},E_1$.
    \item[(IV)] $E_0,E_{d-1},E_2,E_{d-3},E_4,E_{d-5}, \ldots, E_5,E_{d-4},E_3,E_{d-2},E_1,E_d$.
    \item[(V)] $d=5$ and $E_0,E_5,E_3,E_2,E_4,E_1$. 
\end{enumerate}
Moreover $\mathcal{X}$ has at most two $Q$-polynomial structures. 
\end{theorem}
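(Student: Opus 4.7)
The plan is to treat this as the $Q$-polynomial analog of the Bannai--Ito classification of multiple $P$-polynomial structures of distance-regular graphs, and to mimic that proof with Krein parameters in place of intersection numbers. First, I would re-express $Q$-polynomiality with respect to an ordering $E_0,E_1,\ldots,E_d$ as the statement that the Krein matrix $B^*_1$ with $(i,k)$-entry $q^k_{1i}$ is irreducible tridiagonal in this ordering. A second $Q$-polynomial structure then corresponds to a permutation $\sigma$ of $\{0,1,\ldots,d\}$ with $\sigma(0)=0$ such that the matrix $B^*_{\sigma(1)}$ with entries $q^{\sigma(k)}_{\sigma(1)\,\sigma(i)}$ is also irreducible tridiagonal in the $\sigma$-ordering.

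Second, I would encode $E_{\sigma(1)}=v_{\sigma(1)}(E_1)$ and $E_{\sigma(j)}=w_j(E_{\sigma(1)})$ entrywise, where $v_i$ and $w_j$ are the dual polynomials of degrees $i$ and $j$. Composing gives $E_{\sigma(j)}=(w_j\circ v_{\sigma(1)})(E_1)$, a polynomial expression in $E_1$ whose reduction modulo the minimal polynomial of $E_1$ must coincide with $v_{\sigma(j)}(E_1)$. Tracking, for each $j$, the support $S_j=\{k:q^k_{\sigma(1)\,\sigma(j)}\neq 0\}$ yields strong combinatorial constraints: tridiagonality in the $\sigma$-ordering forces $\sigma^{-1}(S_j)\subseteq\{j-1,j,j+1\}$, while the $B^*_1$-tridiagonality constrains which $q^k_{1i}$ can be nonzero. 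The hypothesis $\operatorname{rank}(E_1)>2$ is precisely what excludes degenerate low-dimensional coincidences in the resulting inequalities.

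Third, the case analysis branches on the possible values of $\sigma(1)$. The choices $\sigma(1)\in\{2,d,d-1\}$ each propagate different forced extensions of $\sigma$ through the neighborhood argument above, generating the four infinite patterns (I)--(IV). The sporadic case (V) for $d=5$ would be handled by direct inspection of the Krein array, since for small $d$ the support conditions allow accidental vanishings that the generic argument does not capture. Finally, to obtain the "at most two" bound, I would assume a third $Q$-polynomial ordering and match its shape against (I)--(V); three simultaneous tridiagonalities over-determine the Krein parameters, and the positivity conditions $q^k_{ij}\geq 0$ then force a contradiction.

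The main obstacle I anticipate is the sporadic case (V) together with the boundary behavior at the two ends of the $\sigma$-ordering. Unlike intersection numbers of distance-regular graphs, Krein parameters can vanish for non-structural reasons, so the neighborhood argument cannot be executed as cleanly as in the Bannai--Ito setting; the bookkeeping for which $q^k_{\sigma(1)\,\sigma(j)}$ are nonzero has to be done with care, and the exceptional $d=5$ configuration must be verified by an explicit computation rather than absorbed into the uniform case analysis.
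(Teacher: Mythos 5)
This theorem is not proved in the paper at all: it is quoted verbatim from Suzuki \cite{S98} as an external input, so there is no in-paper argument to compare your proposal against. What you have written is a plausible reconstruction of the strategy Suzuki actually used --- dualizing the Bannai--Ito classification of multiple $P$-polynomial structures, with the Krein matrix $B_1^*$ and the parameters $q_{ij}^k$ playing the roles of $B_1$ and the intersection numbers --- and you correctly identify that the case split is governed by $\sigma(1)\in\{2,d-1,d\}$, which does match the shapes (I)--(V).

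However, as a proof your text has genuine gaps: every hard step is asserted rather than carried out. (a) You never establish \emph{why} $\sigma(1)$ must lie in $\{2,d-1,d\}$; this is the heart of the matter and in the $Q$-polynomial setting it cannot be read off from a connectivity or girth argument the way it can for distance-regular graphs, precisely because Krein parameters may vanish ``accidentally'' (a difficulty you name but do not resolve). (b) The claim that tridiagonality of $B^*_{\sigma(1)}$ in the $\sigma$-ordering ``forces'' the propagation of $\sigma$ along each branch is a restatement of the definition, not an argument; the actual work is showing that the support sets $S_j$ are intervals and that the two interval structures interact rigidly. (c) The ``at most two structures'' claim is waved at (``over-determine the Krein parameters \ldots positivity forces a contradiction'') with no mechanism given. (d) The sporadic case (V) is deferred to ``direct inspection'' without saying what is inspected. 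In short, this is a research plan for reproving a known, substantial theorem, not a proof; for the purposes of the present paper the correct move is the one the authors make, namely to cite \cite{S98}.
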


The {\it splitting field} $\mathbb{F}$ of an association scheme is the smallest extension of the rationals $\mathbb{Q}$ containing the eigenvalues of all $A_i$ \cite{MW09,M91}. Indeed, the splitting field coincides with the smallest extension of $\mathbb{Q}$ containing the entries of all $E_i$. 
We consider the algebra 
\[ \mathfrak{A}'={\rm Span}_{\mathbb{F}}\{A_i\}_{i=0}^d={\rm Span}_{\mathbb{F}}\{E_i\}_{i=0}^d. \] 
Then a field automorphism $\sigma$ in ${ \rm Gal}( \mathbb{F}/\mathbb{Q} )$
induces the algebra automorphism of $\mathfrak{A}'$ (for the both multiplications) by entrywise action $(m_{ij})^\sigma:=(m_{ij}^\sigma)$. 
The field automorphism $\sigma$ faithfully acts on the primitive idempotents $\{E_i\}_{i=0}^d$ and $A_i^\sigma=A_i$ \cite{MW09}.  
If there exists an irrational entry in $E_i$, then
there exists $\sigma \in {\rm Gal }(\mathbb{F}/ \mathbb{Q})$ such that 
$E_i^\sigma \ne E_i$. For such $\sigma$, a $Q$-polynomial  ordering $E_0,E_1,\ldots, E_d$ becomes the other $Q$-polynomial ordering $E_0^\sigma=E_0, E_1^\sigma,\ldots, E_d^\sigma$ \cite{MW09}. 

Delsarte, Goethals, and Seidel \cite{DGS} showed that an $s$-distance set with high strength as spherical design has a connection to association scheme as follows.

\begin{theorem}[{Delsarte--Goethals--Seidel \cite{DGS}}] \label{thm:DGS}
Let $C$ be a spherical $s$-distance $t$-design. For inner products $a_i \in A(C)$ with $1=a_0> a_1 > \cdots >a_s$, we define the relations $R_i=\{(x,y) \in C \times C \mid \langle x,y \rangle =a_i \}$. 
If $t\geq 2s-2$ holds, then $(C,\{R_i\}_{i=0}^s)$ is 
an association scheme.  
\end{theorem}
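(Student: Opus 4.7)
The aim is to verify the intersection-number axiom: for every $i,j,k \in \{0,1,\dots,s\}$ and every $(x,y) \in R_k$, the quantity $|\{z \in C : (x,z) \in R_i,\, (z,y) \in R_j\}|$ depends only on $i,j,k$. Equivalently, $A_iA_j \in \mathrm{Span}\{A_0,A_1,\dots,A_s\}$ for all $i,j$.

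The first step is to realise each $A_i$ as a polynomial in the Gram matrix. Introduce the Lagrange polynomials
\[
F_i(u) = \prod_{\substack{0 \le j \le s \\ j \ne i}} \frac{u - a_j}{a_i - a_j}, \qquad i = 0,1,\dots,s,
\]
each of degree $s$ with $F_i(a_j)=\delta_{ij}$, so that $(A_i)_{xy}=F_i(\langle x,y\rangle)$ for every pair $x,y \in C$ (the case $x=y$ is covered by $a_0=1$). Then
\[
(A_iA_j)_{xy} \;=\; \sum_{z \in C} F_i(\langle x,z\rangle)\,F_j(\langle z,y\rangle)
\]
is a sum, over $z$, of a polynomial of degree $\le 2s$ in $z$. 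The problem reduces to showing that this sum depends only on $\langle x,y\rangle$, for then re-expansion in the Lagrange basis produces the required scalars $p_{ij}^k$, whose non-negativity and integrality are automatic from the combinatorial interpretation.

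The main technical step is a Gegenbauer expansion combined with the addition formula. Writing $F_i = \sum_{k=0}^s c_{i,k}\,G_k^{(n)}$ and $G_k^{(n)}(\langle x,z\rangle) = h_k^{-1}\sum_\mu Y_{k,\mu}(x)\,Y_{k,\mu}(z)$ for a real orthonormal basis $\{Y_{k,\mu}\}_\mu$ of harmonic polynomials of degree $k$ (with $h_k=\dim H_k$), one obtains
\[
(A_iA_j)_{xy} \;=\; \sum_{k,l=0}^{s} \frac{c_{i,k}\,c_{j,l}}{h_k\,h_l}\sum_{\mu,\nu} Y_{k,\mu}(x)\,Y_{l,\nu}(y)\,M^{(k,l)}_{\mu\nu}, \quad M^{(k,l)}_{\mu\nu}:=\sum_{z \in C}Y_{k,\mu}(z)\,Y_{l,\nu}(z).
\]
For each pair $(k,l)$ with $k+l \le 2s-2$, the $t$-design hypothesis replaces $M^{(k,l)}_{\mu\nu}$ by $|C|\,\langle Y_{k,\mu},Y_{l,\nu}\rangle_{\Sp^{n-1}} = |C|\,\delta_{kl}\delta_{\mu\nu}$, and the addition formula run in reverse collapses the corresponding portion of the sum to $|C|\,h_k^{-1}\,c_{i,k}c_{j,k}\,G_k^{(n)}(\langle x,y\rangle)$, which depends only on $\langle x,y\rangle$.

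The hard part, and the main obstacle, is the residual top-degree contribution coming from pairs $(k,l)$ with $k+l > 2s-2$, namely $\{k,l\}=\{s-1,s\}$ and $(k,l)=(s,s)$, on which the design condition has no direct purchase. I would control these by a Gram-matrix/dimension argument on $\R^C$: the $(2s-2)$-design hypothesis makes the evaluation maps $H_k \to \R^C$ (for $k=0,1,\dots,s-1$) pairwise orthogonal isometries up to scale, while the $s$-distance hypothesis together with the Fisher-type bound $\sum_{k=0}^{s}\dim H_k \ge |C|$ leaves just enough room to force $V_{s-1}\perp V_s$ inside $\R^C$, i.e.\ $M^{(s-1,s)}\equiv 0$; the identity $\sum_\mu Y_{s,\mu}^2 \equiv h_s$ together with a trace/symmetry argument then forces $M^{(s,s)}_{\mu\nu}=\alpha\,\delta_{\mu\nu}$ with $\alpha$ independent of $\mu$. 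Consequently the top-degree contribution collapses to a scalar multiple of $G_s^{(n)}(\langle x,y\rangle)$, so $(A_iA_j)_{xy}$ is a polynomial of degree $\le s$ in $\langle x,y\rangle$, and the association-scheme axioms (symmetry of the $R_i$ is immediate, $R_0$ is the identity relation since $a_0=1$) follow.
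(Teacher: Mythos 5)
The paper itself gives no proof of this theorem (it is quoted from \cite{DGS}), so your attempt has to be measured against the standard Delsarte--Goethals--Seidel argument. The first half of your proposal is correct and is indeed the standard setup: writing $(A_iA_j)_{xy}=\sum_{z}F_i(\langle x,z\rangle)F_j(\langle z,y\rangle)$, expanding in Gegenbauer polynomials, applying the addition formula, and using the design condition to collapse every block with $k+l\le 2s-2$. You also correctly identify the crux, namely the blocks with $\{k,l\}=\{s-1,s\}$ and $(k,l)=(s,s)$.

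Your resolution of that crux, however, does not work. The claim $V_{s-1}\perp V_s$, i.e.\ $H_{s-1}^\top H_s=0$, is equivalent (since the degree-$(2s-1)$ harmonic components of the products $Y_{s-1,\mu}Y_{s,\nu}$ span ${\rm Harm}(2s-1)$, while the lower-degree components are killed by the design condition) to the vanishing of $\sum_{z\in C}Y(z)$ for every harmonic $Y$ of degree $2s-1$. That is a genuine strengthening of the $(2s-2)$-design hypothesis, not a consequence of it, and no dimension count yields it: knowing that $V_0,\dots,V_{s-1}$ are pairwise orthogonal and that $V_s$ completes the span of $\mathbb{R}^C$ does not prevent $V_s$ from being tilted into $V_{s-1}$. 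Concretely, for $s=2$ the spherical embedding of the Petersen graph is a $2$-distance $2$-design which is not a $3$-design, so $H_1^\top H_2\ne 0$ there, yet it certainly carries an association scheme. Your second claim, $H_s^\top H_s=\alpha I$, is outright impossible whenever $h_s>|C|$, since that matrix has rank at most $|C|$; the identity $\sum_\mu Y_{s,\mu}^2\equiv h_s$ only fixes its trace. The actual DGS proof sidesteps the top-degree blocks entirely: set $E_k=\frac{1}{|C|}H_kH_k^\top$ for $k\in\{0,\dots,s-1\}$ (this uses only $H_k^\top H_l=|C|\Delta_{k,l}$ for $k+l\le 2s-2$, hence only harmonics of degree at most $s-1$), put $E_s=I-\sum_{k=0}^{s-1}E_k$, and observe that all $s+1$ of these matrices lie in the $(s+1)$-dimensional space $\mathcal{A}=\{(f(\langle x,y\rangle))_{x,y\in C}:\deg f\le s\}$ spanned by $A_0,\dots,A_s$, are nonzero (if $E_s=0$, the $s$ distinct nontrivial inner products would all be roots of a polynomial of degree $s-1$), and are mutually orthogonal idempotents, hence form a basis of $\mathcal{A}$. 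Therefore $\mathcal{A}$ is closed under ordinary matrix multiplication, which is exactly the existence of the intersection numbers. You would need to replace your treatment of the residual blocks with this (or an equivalent) argument.
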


For the association scheme obtained from Theorem~\ref{thm:DGS}, the primitive idempotents are written by characteristic matrices $H_k$. We introduce the relationship between spherical design and characteristic matrices. 
A polynomial with real coefficients in $n$ variables $\xi_1,\ldots,\xi_n$ is {\it harmonic} if it is in the kernel of the
Laplacian $\sum_{i=1}^n \partial^2/ \partial\xi_i^2$.
Let ${\rm Harm}(n,k)={\rm Harm}(k)$ be the linear space of the homogeneous harmonic polynomials of degree $k$ in $n$ variables. 
The dimension of ${\rm Harm}(n,k)$ is 
\[ h_{n,k}=h_k=\binom{n+k-1}{k}-\binom{n+k-3}{k-2}. \]
Let $\{W_{k,i}\}_{i=1}^{h_k}$ be an orthonormal basis of ${\rm Harm}(k)$ with respect to the inner product 
\[ \int_{\mathbb{S}^{n-1}} f(x) g(x) d\mu(x). \] 
For a finite set $C\subset \mathbb{S}^{d-1}$ and an orthonormal basis $\{W_{k,i}\}_{i=1}^{h_k}$, the {\it $k$-th characteristic matrix} $H_k$ is defined to be  the $|C| \times h_k$ matrix $H_k=(W_{k,i}(x))_{x\in C, i \in \{1,\ldots , h_k\}}$. 
\begin{theorem}[{\cite{DGS}}] \label{thm:ch_des}
Let $C$ be a finite subset of $\mathbb{S}^{n-1}$ and $H_k$ be a characteristic matrix of $C$ (we may take any basis of ${\rm Harm}(k)$).  
Then $C$ is a spherical $t$-design if and only if $H_k^\top H_l=|C| \Delta_{k,l}$ for $0\leq k+l \leq t$, where $\Delta_{k,l}$ is the identity matrix if $k=l$, the zero matrix otherwise. 
\end{theorem}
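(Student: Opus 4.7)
The plan is to reduce the spherical $t$-design condition to the orthogonality relations among spherical harmonics. First I would invoke the classical decomposition of the space of polynomials of degree at most $t$ restricted to $\mathbb{S}^{n-1}$: using the identity $\|x\|^2=1$ on the sphere, this space equals $\bigoplus_{k=0}^{t} {\rm Harm}(n,k)$. Consequently the quadrature identity in Definition \ref{def-designs} holds for every polynomial of degree at most $t$ if and only if it holds separately on each ${\rm Harm}(n,k)$ for $0 \le k \le t$.

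Next I would collect two standard facts about spherical harmonics. First, for any $f \in {\rm Harm}(n,k)$ with $k \ge 1$ one has $\int_{\mathbb{S}^{n-1}} f\, d\mu = 0$; this follows from the $O(n)$-invariance of the integral together with the fact that ${\rm Harm}(n,k)$ contains no nonzero rotation-invariant vector for $k \ge 1$. Second, distinct harmonic degrees are orthogonal in $L^2(\mathbb{S}^{n-1})$, so that for the chosen orthonormal bases $\int_{\mathbb{S}^{n-1}} W_{k,i}\,W_{l,j}\, d\mu = \mu(\mathbb{S}^{n-1})\, \delta_{k,l}\delta_{i,j}$.

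For the forward direction, suppose $C$ is a spherical $t$-design. For $k+l \le t$, the product $W_{k,i}W_{l,j}$ is a polynomial of degree $k+l \le t$; applying the quadrature to it and using the above orthonormality yields $(H_k^\top H_l)_{ij} = \sum_{x \in C} W_{k,i}(x) W_{l,j}(x) = |C|\, \delta_{k,l}\delta_{i,j}$, which is the claimed identity. For the converse, take $l = 0$: the unique orthonormal element of ${\rm Harm}(n,0)$ is the constant $1/\sqrt{\mu(\mathbb{S}^{n-1})}$, so the assumption $H_k^\top H_0 = 0$ for $1 \le k \le t$ becomes $\sum_{x \in C} W_{k,i}(x) = 0$ for every $k$ and $i$ in that range. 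Together with the integral-vanishing fact, this gives the quadrature on each ${\rm Harm}(n,k)$ with $1 \le k \le t$; the case $k=0$ is tautological, and the opening decomposition then upgrades this to the full design property.

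The only place that really requires care is the initial polynomial-on-sphere decomposition: one must keep track of polynomials in $\mathbb{R}[x_1,\ldots,x_n]_{\le t}$ versus their restrictions to $\mathbb{S}^{n-1}$ and use $\|x\|^2=1$ to absorb powers of $\|x\|^2$ into the harmonic pieces of each degree. Once that is in hand, the rest is routine bookkeeping of inner products, and the hypothesis that $\{W_{k,i}\}$ is an orthonormal (not merely a linear) basis plays no role beyond turning the Gram matrix into the identity.
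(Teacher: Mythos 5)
The paper states this result as a citation to Delsarte--Goethals--Seidel and gives no proof of its own, so there is nothing internal to compare against; your argument is the standard one (and is essentially the proof in \cite{DGS}). It is correct: the decomposition of the restrictions of polynomials of degree at most $t$ to the sphere as $\bigoplus_{k=0}^{t}{\rm Harm}(n,k)$, the vanishing of $\int_{\mathbb{S}^{n-1}} f\,d\mu$ for harmonics of positive degree, and the orthogonality of distinct harmonic degrees together make the forward direction a one-line computation, and your observation that the converse needs only the $l=0$ cases $H_k^\top H_0=0$, $1\le k\le t$, is exactly right. One small point of bookkeeping: the paper literally defines the $W_{k,i}$ to be orthonormal for $\int_{\mathbb{S}^{n-1}} fg\,d\mu$, under which the forward computation would yield $H_k^\top H_l = \bigl(|C|/\mu(\mathbb{S}^{n-1})\bigr)\Delta_{k,l}$ rather than $|C|\Delta_{k,l}$; you instead use orthonormality with respect to the normalized inner product $\mu(\mathbb{S}^{n-1})^{-1}\int fg\,d\mu$, which is the convention actually needed for the theorem as stated (it forces $W_{0,1}\equiv 1$, so that $H_0$ is the all-ones column and $H_0^\top H_0=|C|$, and it matches the addition formula $|C|E_i=(h_iQ_i(\langle x,y\rangle))$ used later in Remark \ref{rem:DGS}). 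So your proof is the intended one; just state the normalization explicitly rather than inheriting the paper's slightly imprecise phrasing.
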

The following is an expression of the primitive idempotents of the association scheme obtained from Theorem~\ref{thm:DGS}. 
\begin{theorem}[{\cite{D73,DGS}}]
Let $C$ be an $s$-distance $t$-design with $t \geq 2s -2$, which has the structure of an association scheme. Then the primitive idempotent $E_i$ can be expressed by 
\[
E_i=\frac{1}{|C|} H_iH_i^\top
\]
for $i \in \{0,1,\ldots, s-1\}$, and $E_s=I-\sum_{i=0}^{s-1}E_i$, where $I$ is the identity matrix.  
\end{theorem}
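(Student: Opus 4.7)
The plan is to define $M_k := \frac{1}{|C|} H_k H_k^\top$ for $k = 0, 1, \ldots, s-1$ and $M_s := I - \sum_{k=0}^{s-1} M_k$, then verify directly that the collection $\{M_0, \ldots, M_s\}$ coincides with the primitive idempotents of the Bose--Mesner algebra of the association scheme provided by Theorem~\ref{thm:DGS}. The first step is to show each $M_k$ lies in the algebra: its $(x,y)$-entry equals $\frac{1}{|C|}\sum_{i=1}^{h_k} W_{k,i}(x)W_{k,i}(y)$, and by the addition formula for spherical harmonics this sum is a constant multiple of the degree-$k$ Gegenbauer polynomial evaluated at $\langle x,y\rangle$. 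Since $C$ is an $s$-distance set, the entry is therefore constant on each relation $R_j$, so $M_k\in \mathrm{Span}_{\mathbb{R}}\{A_0,\ldots,A_s\}$.

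The second step is the computation $M_k M_l = \frac{1}{|C|^2} H_k (H_k^\top H_l) H_l^\top$, to which Theorem~\ref{thm:ch_des} is applied: the identity $H_k^\top H_l = |C|\Delta_{k,l}$ requires $k+l \leq t$, and this is precisely guaranteed for all $k,l \leq s-1$ by the hypothesis $t \geq 2s-2$. This yields $M_k M_l = \delta_{kl} M_k$ for $k,l\in\{0,\ldots,s-1\}$, and then $M_s^2 = M_s$ and $M_s M_k = 0$ follow automatically, so we have $s+1$ pairwise orthogonal idempotents summing to $I$. Each $M_k$ is moreover nonzero because $\operatorname{trace}(M_k)=\frac{1}{|C|}\operatorname{trace}(H_k^\top H_k)=h_k>0$, using the $2k\leq t$ case $H_k^\top H_k = |C|I$.

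Since the Bose--Mesner algebra of a class-$s$ scheme has dimension exactly $s+1$, the $s+1$ nonzero pairwise orthogonal idempotents $M_0,\ldots,M_s$ must coincide, as a set, with the full set of primitive idempotents $E_0,\ldots,E_s$. The only remaining point --- and the only mild obstacle --- is to confirm that the natural indexing $E_k := M_k$ matches the intended $Q$-polynomial ordering. This follows from the addition formula once more: the $(x,y)$-entry of $M_k$ is a polynomial of degree exactly $k$ in $\langle x,y\rangle$, so under entrywise multiplication $M_k$ is a degree-$k$ polynomial in $M_1$, which forces $M_0 = \frac{1}{|C|}J$ at $k=0$ and supplies the $Q$-polynomial relation $E_k = v_k(E_1)$. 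Aside from this identification, the proof reduces entirely to the two inputs $H_k^\top H_l = |C|\Delta_{k,l}$ from Theorem~\ref{thm:ch_des} and the addition formula for spherical harmonics.
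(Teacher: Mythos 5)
The paper does not prove this theorem; it is quoted from \cite{D73,DGS}. Your argument is the standard one behind that citation and it is essentially sound: membership of $M_k=\frac{1}{|C|}H_kH_k^\top$ in the Bose--Mesner algebra via the addition formula, orthogonality $M_kM_l=\delta_{kl}M_k$ from $H_k^\top H_l=|C|\Delta_{k,l}$ (valid since $k+l\leq 2s-2\leq t$), and the dimension count identifying $s+1$ nonzero pairwise orthogonal idempotents summing to $I$ with the full set of primitive idempotents. The identification of the ordering via the $Q$-polynomial property is also consistent with Remark~\ref{rem:DGS}.

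There is one step you assert but do not justify, and it is genuinely needed: $M_s\neq 0$. Your trace computation $\operatorname{trace}(M_k)=h_k>0$ only covers $k\in\{0,\ldots,s-1\}$. If $M_s$ were zero, you would have only $s$ nonzero orthogonal idempotents summing to $I$, so one of $M_0,\ldots,M_{s-1}$ would be a sum of two primitive idempotents and the conclusion $E_i=M_i$ would fail; moreover the theorem explicitly asserts $E_s=I-\sum_{i=0}^{s-1}E_i$ is a (nonzero) primitive idempotent. To close this, note that $M_s=0$ forces $|C|=\sum_{k=0}^{s-1}h_k=\binom{n+s-2}{n-1}+\binom{n+s-3}{n-1}$, i.e.\ $C$ attains the bound \eqref{dgs-bound} for $t=2s-2$ and is a tight $(2s-2)$-design; but tight $(2s-2)$-designs are $(s-1)$-distance sets by \cite{DGS}, contradicting the hypothesis that $C$ has exactly $s$ distances. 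With that one addition your proof is complete.
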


\begin{remark} \label{rem:DGS}
For $t\geq 2s-2$, $H_i^\top H_i$ is the identity matrix of degree $h_i$ for $i\in \{0,1,\ldots, s-1\}$ by Theorem~\ref{thm:ch_des}. 
This implies that the rank of $E_i$ is $h_i$ and
the matrix $E_i$ can be expressed by $|C|E_i=(h_i Q_i(\langle x,y \rangle))_{x,y \in C}$ for $i\in \{0,1,\ldots, s-1\}$, where $Q_i$ is the Gegenbauer polynomial of degree $i$ normalized by $Q_i(1)=1$. 
From this fact, $E_1$ is identified with the Gram matrix of $C$, and the association scheme obtained from $C$ is a $Q$-polynomial association scheme with respect to $E_1$.
\end{remark}

Now we prove the rationality of the inner products. 

\begin{theorem} \label{thm:s>5}
Let $s$ be an integer greater than $5$, and $n$ an integer greater than $2$. 
Let $C$ be a spherical $s$-distance $t$-design in $\mathbb{S}^{n-1}$. 
If $t \geq 2s-2$ holds, then the inner product between any two points in $C$ is rational. 
\end{theorem}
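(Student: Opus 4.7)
The plan is to argue by contradiction using the Galois action of the splitting field on $Q$-polynomial orderings, combined with Suzuki's classification (Theorem~\ref{thm:suzuki}). Suppose some inner product of $C$ is irrational. Since $|C|E_1 = (n\,\langle x,y\rangle)_{x,y\in C}$ by Remark~\ref{rem:DGS}, $E_1$ then has an irrational entry, so there exists $\sigma\in\mathrm{Gal}(\mathbb{F}/\mathbb{Q})$ with $E_1^\sigma\ne E_1$. By the facts recalled from \cite{MW09} just before Theorem~\ref{thm:DGS}, applying $\sigma$ to the $Q$-polynomial ordering $E_0,E_1,\ldots,E_s$ produces a second, distinct $Q$-polynomial ordering $E_0,E_1^\sigma,\ldots,E_s^\sigma$ of the same association scheme.

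The key auxiliary observation is that $\sigma$ preserves the rank of every primitive idempotent. For $0\le i\le s-1$, the diagonal entries of $E_i$ equal $h_i/|C|\in\mathbb{Q}$, so $\mathrm{tr}(E_i)=h_i$, and since $E_i^\sigma$ is again an orthogonal projection $\mathrm{rank}(E_i^\sigma)=\mathrm{tr}(E_i^\sigma)=h_i$; the same argument handles $E_s$. A short direct computation with the formula $h_k=\binom{n+k-1}{k}-\binom{n+k-3}{k-2}$ shows that for $n\ge 3$ the sequence $h_0<h_1<\cdots<h_{s-1}$ is strictly increasing.

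Because $\mathrm{rank}(E_1)=n\ge 3>2$, Theorem~\ref{thm:suzuki} forces the new ordering into one of the types (I)--(V). I would then eliminate each type by reading off $E_j^\sigma$ by position and invoking rank preservation. Case (V) fails immediately because $s\ge 6>5$. Case (I) demands $E_1^\sigma=E_2$, hence $h_2=h_1=n$, which solves to $n=2$. Case (IV) demands $E_1^\sigma=E_{s-1}$, hence $h_{s-1}=h_1$, and monotonicity forces $s=2$. Case (II) demands $E_2^\sigma=E_1$, hence $h_2=h_1=n$, again $n=2$. Case (III) demands $E_2^\sigma=E_2$ (consistent) together with $E_3^\sigma=E_{s-2}$, hence $h_3=h_{s-2}$, and monotonicity forces $s-2=3$, i.e.\ $s=5$. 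Every outcome contradicts the hypotheses $s\ge 6$ and $n\ge 3$, completing the proof.

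The principal obstacle is not technical but bookkeeping: one must carefully align the image sequence $E_0,E_1^\sigma,\ldots,E_s^\sigma$ against Suzuki's listed orderings by position (so that, for example, $E_3^\sigma$ is identified with whatever sits in position $3$ of the alternative ordering, namely $E_{s-2}$ in case (III)) rather than by subscript. Once this dictionary is correctly set up, the strictly increasing sequence of harmonic dimensions $h_i$ disposes of every case mechanically, and the more delicate arguments reserved for small $s$ in subsequent sections are not required here.
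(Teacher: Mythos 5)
Your proposal is correct and follows essentially the same route as the paper: Galois action of the splitting field on the primitive idempotents, rank preservation via the rational diagonal entries $h_i/|C|$, and Suzuki's classification of multiple $Q$-polynomial orderings. The only difference is organizational — the paper first uses the distinctness of $h_0,\dots,h_{s-1}$ to pin the new ordering down to $E_0,E_s,E_2,\dots,E_{s-1},E_1$ and then matches it against Suzuki's list, while you run through the five types and extract a rank contradiction from each; both versions are sound.
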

\begin{proof}
With our assumption, $C$ has the structure of a $Q$-polynomial association scheme of class $s$ by Theorem \ref{thm:DGS} and Remark \ref{rem:DGS}. 
Let $\{E_i\}_{i=0}^s$ be the primitive idempotents of the Bose--Mesner algebra of the association scheme, where $E_1$ is identified with the Gram matrix of $C$. By Remark \ref{rem:DGS}, the rank of $E_k$ is equal to $h_{n,k}$ for each $k \in \{0,\ldots, s-1\}$. In particular, the rank of $E_1$ is different from $E_k$ except for $k=s$. 

Assume there exists an irrational inner product in $A(C)$, namely there exists an irrational entry $a$ in $E_1$. Let $\mathbb{F}$ be the splitting field of the association scheme. 
Then there exists a field automorphism $\sigma$ in ${\rm Gal}(\mathbb{F}/\mathbb{Q})$ which does not fix $a$. 
 The automorphism $\sigma$ faithfully acts on the primitive idempotents $\{E_i\}_{i=0}^s$, and does not fix $E_1$. Since the rank of $E_k^\sigma$ is the same as  $E_k$, we must have $E_1^\sigma=E_s$ and $E_k^\sigma=E_k$ for each $k \in \{2,\ldots, s-1\}$. This implies that the association scheme has the other $Q$-polynomial structure with ordering $E_0,E_s,E_2,E_3,\ldots,E_{s-1},E_1$. 
By Theorem \ref{thm:suzuki}, the possible cases are (II) with $s=2$, or (III) with $s=3,4,5$. Therefore, for $s\geq 6$, the inner products are all rationals. 
\end{proof}

\section{The cases $s=3, 4, 5$}

The small cases $s=3$, 4, 5 are dealt by careful investigation of the distance 
distributions of the corresponding codes.
Let $A(C)=\{a_1,a_2,\ldots,a_s\}$ be the nontrivial inner products in $C$ 
satisfying 
\[ -1 \leq a_s <a_{s-1} <\cdots<a_1<1. \]
For fixed $x \in C$ and $a \in A(C)$, let $A_a(x):=|\{ y \in C: \langle x,y \rangle =a \}|$.  Then the system of nonnegative integers
\[ \left( A_{a_1}(x),A_{a_2}(x),\ldots, A_{a_s}(x) \right) \]
is called the {\it distance distribution} of $C$ with respect to $x$. 

Let $C \subset \mathbb{S}^{n-1}$ be a spherical $s$-distance $t$-design for $n \geq 3$, $s \geq 3$, and $t \geq 2s-2$. 
Then the numbers $A_{a_i}(x)$ do not depend on $x$ (so we omit $x$ in the sequel) and satisfy the equations
\begin{equation} \label{syst1}
\sum_{i=1}^s a_i^j A_{a_i}=f_j |C|-1, \ \ j=0,1,\ldots,s, 
\end{equation}
where $f_j=0$ for odd $j$, $f_0=1$, $f_{2i}=(2i-1)!!/n(n+2)\cdots(n+2i-2)$
for $1 \leq i \leq [t/2]$ \cite{DGS}. We will use below the design properties (i.e., the equations from \eqref{syst1}) to analyze the possibilities for the distance distributions and the inner products. 

\subsection{The cases $s=4, 5$}

\begin{theorem} \label{s=4,5} 
Let $s=4$ or 5, and $n$ be an integer greater than $2$. Let $C$ be a spherical $s$-distance $t$-design in $\mathbb{S}^{n-1}$, where $t \geq 2s-2$. 
Then the inner product between any two points in $C$ is rational. 
\end{theorem}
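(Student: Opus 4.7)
My plan follows the Galois-theoretic setup of Theorem~\ref{thm:s>5} but, because Suzuki's theorem alone no longer rules out the swap $E_1\leftrightarrow E_s$ when $s\in\{4,5\}$, it must be supplemented by a careful use of the design identities \eqref{syst1}. Assume that some $a\in A(C)$ is irrational and pick $\sigma\in\mathrm{Gal}(\mathbb{F}/\mathbb{Q})$ with $\sigma(a)\neq a$. The ranks of $E_0,E_1,\dots,E_{s-1}$ are the pairwise distinct numbers $1,h_{n,1},\dots,h_{n,s-1}$ (since $n\ge 3$), so a rank-preserving permutation of $\{E_0,\dots,E_s\}$ can only interchange $E_s$ with some $E_k$ of equal rank. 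If $\mathrm{rank}\,E_s=h_{n,k}$ for some $k\neq 1$, then $\sigma$ fixes $E_1$ and hence the Gram matrix $G=(|C|/n)E_1$, contradicting $\sigma(a)\neq a$. Thus the only possibility is $\mathrm{rank}\,E_s=n$ with $E_1^\sigma=E_s$ and $E_k^\sigma=E_k$ for $2\le k\le s-1$; in particular $|C|=1+2n+\sum_{k=2}^{s-1}h_{n,k}$ is fixed in closed form.

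Equating entries of $E_1^\sigma$ and $E_s=I-\sum_{k=0}^{s-1}E_k$ on each relation $R_i$ yields the pointwise identity
\[
\sigma(a_i)=\psi(a_i),\qquad \psi(t):=-\frac{1}{n}\sum_{k=0}^{s-1}h_{n,k}Q_k(t),
\]
a polynomial of degree $s-1$ with rational coefficients. Because $\sigma$ acts on $\{E_0,\dots,E_s\}$ as a transposition, $\sigma^2$ fixes every $E_k$ and hence every $a_i$, so $\psi$ restricts to an involution $\pi$ of $\{a_1,\dots,a_s\}$ with each rational $a_i$ a genuine fixed point of $\psi$. Applying $\sigma$ to \eqref{syst1}, using that the $A_{a_i}$ are integers and invoking the invertibility of the Vandermonde matrix in $a_1,\dots,a_s$, I conclude that $A_{a_{\pi(i)}}=A_{a_i}$: Galois-conjugate inner products share a common valency.

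The hypothesis $t\ge 2s-2$ gives at least $2s-1$ equations in \eqref{syst1} for the $s$ unknowns $A_{a_1},\dots,A_{a_s}$. I would use the first $s$ equations (Vandermonde-type) to express each $A_{a_i}$ as a rational function of the elementary symmetric polynomials of $\{a_1,\dots,a_s\}$ and of $|C|$, then substitute into the surplus equations $j=s,s+1,\dots,t$ to obtain polynomial identities in the $a_i$'s and $n$. Combined with the closed form of $|C|$, the equal-valency relations $A_{a_{\pi(i)}}=A_{a_i}$, and the involution identity $\psi\circ\psi=\mathrm{id}$ on $\{a_1,\dots,a_s\}$, these reduce the problem to a small number of explicit polynomial systems indexed by the orbit structure of $\pi$ (for $s=4$: a single $2$-cycle with two fixed points, or two disjoint $2$-cycles; for $s=5$: one or two $2$-cycles together with the remaining $a_i$ fixed). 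Within each case I would solve the Vandermonde system explicitly in $n$ and in the symmetric functions of the irrational orbit(s), and show that no choice produces simultaneously positive integer valences and real inner products satisfying $-1\le a_s<\cdots<a_1<1$.

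The main obstacle will be this orbit-by-orbit case analysis. Although the number of cases is small, each involves polynomial manipulations between the Gegenbauer values $Q_k(a_i)$ and the symmetric-function expressions for $A_{a_i}$, and the $s=5$ configuration is algebraically the most delicate. A secondary source of bookkeeping is the low-dimensional regime: when $n$ is small, some of the $h_{n,k}$ may accidentally coincide with each other or with the forced value of $\mathrm{rank}\,E_s$, enlarging the set of admissible $\sigma$; these degenerate possibilities must be tracked and excluded separately, using the fact that $C$ must still satisfy all equations of \eqref{syst1} up to $j=t$.
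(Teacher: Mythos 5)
Your Galois-theoretic setup is sound and matches the paper's starting point: the rank argument correctly forces $E_1^\sigma=E_s$ and $E_k^\sigma=E_k$ for $2\le k\le s-1$, and the identity $\sigma(a_i)=-\frac{1}{n}\sum_{k=0}^{s-1}h_{n,k}Q_k(a_i)$ is a correct (and not unpleasant) consequence. But there are two genuine problems. First, a local error: you cannot conclude that $\psi$ restricts to an involution of $\{a_1,\dots,a_s\}$. The value $\sigma(a_i)=\psi(a_i)$ is $|C|/n$ times an off-diagonal entry of $E_s$, i.e.\ a dual-eigenvalue-type quantity, and there is no reason it should again be an inner product of $C$; so the permutation $\pi$ and the equal-valency relations $A_{a_{\pi(i)}}=A_{a_i}$ are not established, and the orbit structure on which your whole case division rests is not well defined. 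Second, and more seriously, the decisive step is missing. The paper extracts from $E_k^\sigma=E_k$ ($k=2,\dots,s-1$, for \emph{every} $\sigma$ in the Galois group) that $Q_2(a)$ and $Q_3(a)$ are \emph{rational} for every inner product $a$; rationality of $Q_2(a)$ forces $a=\pm\sqrt{b}$ with $b$ rational, and rationality of $Q_3(a)=\pm\sqrt{b}(ub+v)$ then forces either $a$ rational or $b=3/(n+2)$. This pins the possible irrational inner products down to the single pair $\pm\sqrt{3/(n+2)}$ with equal valencies, after which the odd-degree equations of \eqref{syst1} involve only the remaining $s-2$ rational inner products and give an overdetermined Vandermonde-type system with a short, explicit contradiction in each subcase. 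Without this reduction, your proposed polynomial systems still contain the unknown irrational $a_i$'s, and the ``orbit-by-orbit'' elimination you defer to is far harder than you suggest and is not carried out.

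In short: the part of the argument you have written out is the part that coincides with the paper, while the part you describe in the conditional mood (``I would solve\dots and show that no choice produces\dots'') is the actual content of the proof, and the route you sketch for it lacks the key lemma (rationality of $Q_2(a_i)$ and $Q_3(a_i)$, hence $a=\pm\sqrt{3/(n+2)}$) that makes the computation finite and tractable. As it stands the proposal is a plan with a gap at its center, not a proof.
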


\begin{proof}
It follows from the end of the proof of Theorem \ref{thm:s>5} (see also Remark 
\ref{rem:DGS}) that $Q_k(a)$ is rational for each inner product $a$ and $k=2,\ldots,s-1$. Using this for $k=2$, we conclude that all 
inner products are of the form $\pm \sqrt{b}$ for some rational $b$. 
Using the same fact for $k=3$, we see that $\pm \sqrt{b}(ub+v)$ is 
rational (with $u$ and $v$ rational), which means that either $\sqrt{b}$
is rational or $ub+v=0$. In the first case we are done, and in the second 
case the explicit form of the Gegenbauer third degree polynomial implies 
that $b=3/(n+2)$. 

Therefore, we may have only $\pm \sqrt{3/(n+2)}$ as possible irrational inner products. Clearly, both should appear with, moreover,  $A_{-\sqrt{3/(n+2)}}=A_{\sqrt{3/(n+2)}}$, following trivially from \eqref{syst1} for $j=1$. 
We consider separately the cases $s=4$ and $s=5$, where $C$ has $s-2=2$ or 3 further rational inner products. 

{\sl Case 1.} $s=4$. We denote the two rational inner products by $a$ and $b$,
where $a<b$, and the corresponding entries from the distance distribution by $X$ and $Y$. The design properties \eqref{syst1} for odd $i$ (note that $t \geq 2s-2 \geq 6$) imply that 
\[ Xa+Yb=Xa^3+Yb^3=Xa^5+Yb^5=-1. \] 
If $a^2\ne b^2$ and $ab \neq 0$, the first two equations give 
\[ X=-\frac{1-b^2}{a(a^2-b^2)}, \ Y=-\frac{1-a^2}{b(b^2-a^2)}. \] Then, $$-1=Xa^5+Yb^5=\frac{a^4(1-b^2)}{b^2-a^2}-\frac{b^4(1-a^2)}{b^2-a^2}=a^2b^2-a^2-b^2,$$
whence $(1-a^2)(1-b^2)=0$. This implies $a=-1$ and then $Y=0$, which is a contradiction. 

If $a^2=b^2$, it follows that $a=-b$. Then $-1=aX+bY=a(X-Y)$ and
$-1=a^3X+b^3Y=a^3(X-Y)$, implying $a=-1$, a contradiction with $a=-b$. 

If $ab=0$, let for example $b=0$. Then $Xa=Xa^3=-1$ gives $a=-1$ and $X=1$,
i.e. the inner products are $-1$, $0$ and $\pm \sqrt{3/(n+2)}$; note that 
the assumption $a=0$ would already contradict to $a<b$. Using \eqref{syst1} for $i=2$ and 4, we have  
\[ 2\cdot \frac{3}{n+2} \cdot A_{\sqrt{3/(n+2)}} =\frac{|C|}{n}-2 \] and 
\[ 2 \cdot \frac{9}{(n+2)^2} \cdot A_{\sqrt{3/(n+2)}}=\frac{3|C|}{n(n+2)}-2, \]
respectively, yielding the equality 
\[ \frac{3}{n+2}\cdot \left(\frac{|C|}{n}-2\right)=\frac{3|C|}{n(n+2)}-2, \]
which is only possible for $n=1$, a contradiction.

{\sl Case 2.} $s=5$. Denote by $a<b<c$ the three rational inner products and by
$X$, $Y$, and $Z$ the corresponding entries from the distance distribution. 
Since $t \geq 2s-2 \geq 8$, we have  
\[ Xa+Yb+Zc=Xa^3+Yb^3+Zc^3=Xa^5+Yb^5+Zb^5=Xa^7+Yb^7+Zc^7=-1 \]
from \eqref{syst1} for $i=1,3,5,7$, respectively. Assuming that there are no equal among $a^2$, $b^2$, and $c^2$ and $abc \neq 0$, we obtain 
\[ X=-\frac{(1-b^2)(1-c^2)}{a(a^2-b^2)(a^2-c^2)}, \ Y=-\frac{(1-a^2)(1-c^2)}{b(b^2-a^2)(b^2-c^2)}, \ Z=-\frac{(1-a^2)(1-b^2)}{c(c^2-b^2)(c^2-a^2)} \]
by the first three equations. Therefore, 
\[-1=Xa^7+Yb^7+Zc^7=-a^2-b^2-c^2+a^2b^2+a^2c^2+b^2c^2-a^2b^2c^2, \]
that is $(1-a^2)(1-b^2)(1-c^2)=0$. If $a=-1$, then $Y=Z=0$, 
a contradiction. 

Assume now (without loss of generality) that $b^2=c^2$ that is, $b=-c.$ This 
gives the equations
\[ Xa+(Y-Z)b=Xa^3+(Y-Z)b^3=Xa^5+(Y-Z)b^5=Xa^7+(Y-Z)b^7=-1, \]
which implies, in the same way as in the case $s=4$, that 
$a=-1$, $X=1$, and $Y-Z=0$ (note that still $abc \neq 0$). 
The equations for even $i$ from \eqref{syst1} give 
$$2\cdot \frac{3}{n+2} \cdot A_{\sqrt{3/(n+2)}}+2Yb^2 =\frac{|C|}{n}-2, $$ $$2 \cdot \frac{9}{(n+2)^2} \cdot A_{\sqrt{3/(n+2)}}+2Yb^4=\frac{3|C|}{n(n+2)}-2,$$
$$2 \cdot \frac{27}{(n+2)^3} \cdot A_{\sqrt{3/(n+2)}}+2Yb^6=\frac{15|C|}{n(n+2)(n+4)}-2.$$ $$2 \cdot \frac{81}{(n+2)^4} \cdot A_{\sqrt{3/(n+2)}}+2Yb^8=\frac{105|C|}{n(n+2)(n+4)(n+6)}-2,$$ 
respectively. Expressing $b^2$ from the first three equations yields  
\[ b^2=\frac{n(n+2)(n+4)-3|C|}{n(n+2)(n+4)} \]
and, similarly, the last three equations give 
\[ b^2=\frac{n(n+2)(n+4)(n+6)-30|C|}{(n+6)(n(n+2)(n+4)-3|C|)}. \]
This implies $3|C|=2n(n+1)(n+2)(n+4)/(n+6)$, whence in turn 
$b^2=\frac{4-n}{n+6}$, possible for $n=2$ and 3 only. Now 
it follows that $|C|=12$ and $n=2$, which gives $s\geq 6$. 

Finally, if $c=0$ (again without loss of generality), then  
\[ Xa+Yb=Xa^3+Yb^3=Xa^5+Yb^5=Xa^7+Yb^7=-1 \]
which is dealt in the same way as in the case $s=4$ with $a^2 \neq b^2$. 
\end{proof}

\subsection{The case $s=3$}

We will use the following fact which can be referred to as 
Besicovitch's theorem \cite{Bes}. If $n_1,n_2,\ldots,n_k$ are mutually distinct squarefree positive integers and $b_1,b_2,\ldots,b_k$ are rationals, then the equality 
\begin{equation} \label{bes-equ}
  b_1\sqrt{n_1}+b_2\sqrt{n_2}+\cdots+b_k\sqrt{n_k}=0
\end{equation}
is possible only when $b_1=b_2=\cdots=b_k=0$. Allowing equal $n_i$'s, 
it follows that \eqref{bes-equ} implies, possibly after some rearrangements, that $n_i=n_j$ and $b_i+b_j=0$ for some $1 \leq i<j \leq k$. 

\begin{theorem} \label{thm:3-5} If $C \subset \mathbb{S}^{n-1}$, $n \geq 3$, is a spherical $3$-distance $4$-design, 
then its inner products are rational or $n=3$ and $C$ is isometric to the icosahedron. 
\end{theorem}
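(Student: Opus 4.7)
The plan is to combine the Galois--automorphism machinery of Section~2 with Besicovitch's theorem applied to the design equations in \eqref{syst1}. First, I would set up the association-scheme structure: by Theorem~\ref{thm:DGS} the code $C$ carries a $Q$-polynomial scheme of class $3$, and its primitive idempotents $E_0,E_1,E_2,E_3$ have ranks $1$, $n$, $(n-1)(n+2)/2$, and $|C|-n(n+3)/2$ respectively. For $n\ge 3$ the rank of $E_2$ strictly exceeds that of $E_1$, so among the non-identity rank-preserving permutations of $\{E_i\}$ the only one that moves $E_1$ is the transposition $E_1\leftrightarrow E_3$ (with $E_0$ and $E_2$ fixed), and it requires $|C|=n(n+5)/2$. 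Hence if some inner product is irrational, the corresponding Galois automorphism $\sigma\in{\rm Gal}(\mathbb{F}/\mathbb{Q})$ must fix $E_2$; by Remark~\ref{rem:DGS} this yields $Q_2(a)\in\mathbb{Q}$ for every $a\in A(C)$, and since $Q_2$ is a polynomial in $x^2$, we obtain $a^2\in\mathbb{Q}$ for every inner product $a$.

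Next I would write $a_i=r_i\sqrt{d_i}$ with $r_i\in\mathbb{Q}$ and $d_i$ a squarefree positive integer (so $d_i=1$ iff $a_i$ is rational), and apply Besicovitch's theorem to \eqref{syst1} for $j=1$ and $j=3$, both of which have right-hand side $-1$. A case analysis on how many of the $d_i$ exceed $1$ and which of them coincide would eliminate every configuration except one: exactly two of the $a_i$ share an irrational $\sqrt{d}$ and the remaining one is rational. Using the $j=1$ and $j=3$ equations together would then force the irrational pair to be $\{a_1,a_2\}$ with $a_2=-a_1$ and the rational value to be $a_3=-1$, the other two pairings $\{a_1,a_3\}$ and $\{a_2,a_3\}$ being excluded by the strict ordering $a_3<a_2<a_1$; moreover the distance distribution would be $A_1=A_2=(|C|-2)/2$ and $A_3=1$.

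Next I would substitute these data into \eqref{syst1} for $j=2$ and $j=4$. Each equation yields an expression for $a_1^2$ as a rational function of $n$ and $|C|$, and equating them and clearing denominators produces, after factoring, the identity $2(n-1)|C|\bigl(|C|-n(n+1)\bigr)=0$; hence $|C|=n(n+1)$. Combined with the Galois constraint $|C|=n(n+5)/2$ from the first step, this forces $n=3$ and $|C|=12$, whence $a_1^2=1/5$. The inner-product data $\{1/\sqrt{5},-1/\sqrt{5},-1\}$ on $\mathbb{S}^{2}$ with $|C|=12$ is realized only by the icosahedron, contradicting the assumed irrationality in all other cases.

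The main obstacle I expect is the Besicovitch case analysis in the second step: one must carefully rule out (i) a single irrational $a_i$, (ii) two or three irrationals with mutually distinct squarefree parts, and (iii) the two geometrically inadmissible pairings when the two irrationals share a common squarefree part. The $j=3$ equation is essential here, as using only $j=1$ one cannot pin down either the sign relation $a_2=-a_1$ or the value $a_3=-1$, and without both of these the numerical elimination leading to $|C|=n(n+1)$ does not go through.
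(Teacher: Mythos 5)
Your argument is correct, and its first three quarters coincide with the paper's: the Galois action on the primitive idempotents gives $Q_2(a)\in\mathbb{Q}$, hence $a^2\in\mathbb{Q}$; Besicovitch applied to the $j=1$ and $j=3$ moment equations forces the antipodal structure $a_1=-a_2=\sqrt{b}$, $a_3=-1$, $A_{a_3}=1$, $A_{a_1}=A_{a_2}=(|C|-2)/2$ (your insistence that the $j=3$ equation is indispensable here is right, and is in fact spelled out more carefully than in the paper's rather terse ``combining Besicovitch and the $1$-degree property''); and the $j=2,4$ equations then give $|C|=n(n+1)$. Where you genuinely diverge is the endgame. The paper observes that $|C|=n^2+n$ means $C$ is a \emph{tight} spherical $5$-design (antipodality upgrades the $4$-design to a $5$-design), having disposed of the tight case at the outset via the Bannai--Damerell/DGS classification, under which the only tight $5$-design with irrational inner products is the icosahedron. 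You instead extract a second cardinality constraint directly from the Galois step: since the automorphism moving $E_1$ must send it to an idempotent of equal rank, and rank considerations force $E_1^\sigma=E_3$, you get $\operatorname{rank}E_3=n$, i.e.\ $|C|=n(n+5)/2$; intersecting with $|C|=n(n+1)$ pins down $n=3$, $|C|=12$, $b=1/5$ without ever invoking the tight-design classification. This is a nice self-contained alternative; its only external input is the uniqueness of the $12$-point code on $\mathbb{S}^2$ with inner products $\{\pm 1/\sqrt{5},-1\}$, which you assert rather than prove --- but the paper leans on essentially the same uniqueness statement (packaged inside the classification of tight $5$-designs), so you are no worse off. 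Two small expository points you should tighten if you write this up: (i) the conclusion $Q_2(a)\in\mathbb{Q}$ needs $E_2$ to be fixed by \emph{every} element of ${\rm Gal}(\mathbb{F}/\mathbb{Q})$, not just the one $\sigma$ moving $a$ (this is immediate, since any $\sigma$ fixing $E_1$ fixes all inner products, but say it); and (ii) carry out the Besicovitch case analysis you only sketch, since with three inner products the configurations ``one irrational'', ``two irrationals with distinct squarefree parts'', and ``all three irrational'' each need a one-line dismissal before the surviving case can be analyzed.
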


\begin{proof} We recall that tight 5-designs could possibly exist only
in dimensions $n=m^2-2$ for some odd positive integer $m \geq 3$
or $n=3$ (the icosahedron) and their inner products are $-1$ and $\pm 1/m$ if $n=m^2-2$ or $-1$ and $\pm 1/\sqrt{5}$ if $n=3$. We will prove below
that $C$ is a 5-design, thus we may assume afterwards in the proof that $C$ is not tight. In particular, we will have $|C|>n^2+n$ by the Delsarte--Goethals--Seidel bound \eqref{dgs-bound}. 

As in the beginning of the proof of Theorem \ref{s=4,5} we conclude that
all irrational inner products of $C$ are of the form $\pm \sqrt{b}$ for some rational $b$. We could not use now the third Gegenbauer polynomial and proceed by a direct argument. 
Let the inner products of $C$ be $a_1$, $a_2$, and $a_3$. 
Combining the Besicovitch's theorem (or proceeding directly) 
and the 1-degree design property
\[ a_1A_{a_1}+a_2A_{a_2}+a_3A_{a_3}=-1 \]
we conclude that that, without loss of generality, $a_1=-a_2=\sqrt{b}$ and
$A_{a_1}=A_{a_2}$. Therefore $a_3A_{a_3}=-1$. 
The 3-degree design property gives $a_3^2=1$, i.e. $a_3=-1$
and $A_{-1}=1$ (i.e., $C$ is antipodal, therefore a 5-design). 
Further, we compute
$A_{a_1}=A_{a_2}=(|C|-2)/2$ and
\[ b=\frac{|C|-2n}{n(|C|-2)},\ b^2=\frac{3|C|-2n(n+2)}{n(n+2)(|C|-2)} \] 
by the 2- and 4-degree design conditions, respectively. Hence
\[ \left(\frac{|C|-2n}{n(|C|-2)}\right)^2=\frac{3|C|-2n(n+2)}{n(n+2)(|C|-2)},\]
yielding $(n-1)(|C|-n^2-n)=0$, i.e. $|C|=n^2+n$ and $C$ is a tight spherical 5-design, which is a contradiction. \end{proof}

\begin{remark} The splitting field of a $Q$-polynomial association scheme is an extension of rationals of degree at most 2 \cite[Theorem 2.2]{MW09}.
We may use this instead of the argument via Theorem \ref{thm:s>5} and the 
Besicovitch theorem. 
\end{remark}

\section{Consequences and reformulations}

The next theorem is implicit in \cite{Lev92} and a short proof can be found
in \cite{BS20}. 

\begin{theorem} \label{max-l-general} 
If $C \subset \mathbb{S}^{n-1}$ is a spherical $s$-distance $(2s-1)$-design,
which is not tight, then its inner products are exactly the roots 
of the Levenshtein polynomial
\begin{equation} \label{lev-equ}
P_{s}(u)P_{s-1}(r) - P_{s}(r)P_{s-1}(u)=0, 
\end{equation}
where $P_i(u)=P_i^{(\frac{n-1}{2},\frac{n-3}{2})}(u)$ is a Jacobi polynomial normalized for $P_i(1)=1$ and $r$ is
determined as the maximal root of the equation
\[ |C|=L_{2s-1}(n,u), \]
$L_{2s-1}(n,u)$ is the Levenshtein bound.
\end{theorem}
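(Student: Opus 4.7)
The plan is to extract, from the $(2s-1)$-design property, a Gauss-type quadrature whose nodes are the inner products of $C$, and then identify these via a short orthogonal-polynomial computation as the roots of the Levenshtein polynomial.

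Fix $x_0\in C$ and apply the $(2s-1)$-design property to $y\mapsto g(\langle x_0,y\rangle)$ for an arbitrary univariate polynomial $g$ of degree $\le 2s-1$. This yields the quadrature identity
\[
g(1)+\sum_{i=1}^{s} A_{a_i}\,g(a_i)=|C|\,\gamma_n\int_{-1}^{1} g(t)(1-t^2)^{(n-3)/2}\,dt,
\]
with $\gamma_n$ the Gegenbauer normalization. Writing $r:=a_1$ for the largest inner product of $C$ and specializing to $g(t)=(1-t)(t-r)h(t)$ with $\deg h\le 2s-3$ kills the contributions at $t=1$ and $t=r$, leaving
\[
\sum_{i=2}^{s} A_{a_i}(1-a_i)(a_i-r)\,h(a_i)=|C|\,\gamma_n\int_{-1}^{1}(t-r)(1-t)^{(n-1)/2}(1+t)^{(n-3)/2}\,h(t)\,dt
\]
for every $h$ of degree $\le 2s-3$. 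Hence $a_2,\ldots,a_s$ are the nodes of an $(s-1)$-point quadrature of algebraic degree $2(s-1)-1$ for the signed measure $(t-r)\,d\nu(t)$, where $d\nu(t)=(1-t)^{(n-1)/2}(1+t)^{(n-3)/2}\,dt$ is the Jacobi measure of parameters $(\tfrac{n-1}{2},\tfrac{n-3}{2})$ defining the polynomials $P_i$ in the Levenshtein polynomial.

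Standard Gauss-quadrature theory, which still applies to this signed measure since the nodes are distinct real numbers, identifies $a_2,\ldots,a_s$ as the roots of the essentially unique polynomial $q_{s-1}$ of degree $s-1$ satisfying $\int q_{s-1}(t)p(t)(t-r)\,d\nu=0$ for every $p$ with $\deg p\le s-2$. Setting $\widetilde q(t):=(t-r)q_{s-1}(t)$ rephrases this as $\widetilde q\perp_{d\nu}\mathbb{R}[t]_{\le s-2}$, so $\widetilde q$ is a linear combination of the Jacobi polynomials $P_{s-1}$ and $P_s$. The requirement $\widetilde q(r)=0$ then forces, up to a scalar,
\[
\widetilde q(t)=P_s(t)P_{s-1}(r)-P_s(r)P_{s-1}(t).
\]
Thus $a_2,\ldots,a_s$ are precisely the roots other than $r$ of $P_s(u)P_{s-1}(r)-P_s(r)P_{s-1}(u)=0$, and together with $a_1=r$ they exhaust all $s$ roots of the Levenshtein polynomial. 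The identity $|C|=L_{2s-1}(n,r)$ finally drops out of the two remaining moment equations in \eqref{syst1} (those not consumed by the factor $(1-t)(t-r)$): substituting in the explicit $a_i$ and simplifying via the Christoffel--Darboux formula for the $P_i$ reproduces Levenshtein's formula, with $r=a_1$ realized as its largest root.

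The main technical point is the signed character of $(t-r)\,d\nu$, since $a_i<r$ for $i\ge 2$ makes the reduced quadrature weights negative and requires Gauss-quadrature uniqueness to be re-justified in that setting. The orthogonal-polynomial description above sidesteps the issue, reducing the argument to the two-dimensional linear-algebra fact that $P_{s-1},P_s$ span the $d\nu$-orthogonal complement of $\mathbb{R}[t]_{\le s-2}$ inside $\mathbb{R}[t]_{\le s}$.
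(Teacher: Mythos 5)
First, a caveat: the paper does not prove Theorem \ref{max-l-general} at all; it is stated as implicit in \cite{Lev92} with a short proof deferred to \cite{BS20}, so your argument can only be compared with that standard derivation. Measured against it, your core computation is correct and is essentially the canonical one. Applying the $(2s-1)$-design identity to $g(t)=(1-t)(t-a_1)h(t)$, $\deg h\le 2s-3$, does show that the annihilator $F(t)=\prod_{i=1}^{s}(t-a_i)$ is orthogonal to all polynomials of degree at most $s-2$ with respect to the Jacobi weight $(1-t)^{(n-1)/2}(1+t)^{(n-3)/2}$; hence $F\in\mathrm{Span}\{P_{s-1},P_s\}$, and $F(a_1)=0$ pins $F$ down, up to scale, as the Levenshtein polynomial \eqref{lev-equ} with $r=a_1$ (note $P_{s-1}(a_1)\ne 0$ is automatic, since otherwise $F$ would drop to degree $s-1$). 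Your handling of the signed measure is also the right move: only the forward implication (the nodes annihilate the left-hand side, hence the nodal polynomial is orthogonal) is used, and that holds for any exact quadrature identity regardless of the sign of the weights.

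The genuine gap is in your final paragraph. That $|C|=L_{2s-1}(n,a_1)$ indeed ``drops out'' (most cleanly by feeding the polynomial $(t-a_1)K_{s-1}(t,a_1)^2$ itself into the design identity, so that all nodal terms vanish and $|C|=f(1)/f_0$ remains), but the theorem defines $r$ as the \emph{maximal} root of $|C|=L_{2s-1}(n,u)$, and you give no argument that $a_1$ is that maximal root. $L_{2s-1}(n,\cdot)$ is a rational function and the equation can a priori have several solutions; one must locate $a_1$ in the interval on which $L_{2s-1}(n,\cdot)$ is the valid, monotonically increasing Levenshtein bound and rule out larger solutions. This is also exactly where the hypothesis that $C$ is not tight must enter, and your proof never uses it: for a tight $(2s-1)$-design (e.g.\ the icosahedron) your orthogonality argument goes through verbatim and still exhibits the inner products as roots of \eqref{lev-equ} with $r=a_1$, so whatever excludes the tight case has to live in the step you skipped (for tight designs $a_1$ sits at an endpoint of the Levenshtein interval and the ``maximal root'' description of $r$ breaks down). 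To close the proof you need Levenshtein's monotonicity analysis of $L_{2s-1}(n,u)$ on its interval of validity, together with non-tightness to guarantee that $a_1$ lies in the interior.
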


Combining Theorem \ref{max-l-general} with the fact that the inner products are $\pm \sqrt{b}$ and the Besicovitch's theorem implies that either all inner products are rational or there are two products which sum up to 0. The last gives a contradiction the following lemma from \cite{BDL99}.

\begin{lemma}[\cite{BDL99}]
Let $t=2s-1 \geq 3$ in the non-tight design case. If $a_0,a_1,\ldots,a_{s-1}$ 
are the roots of \eqref{lev-equ},
then $a_i+a_j \neq 0$ for every $i,j \in \{0,1,\ldots,s-1\}$.
\end{lemma}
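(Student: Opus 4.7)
The plan is to argue by contradiction. Suppose $a_i + a_j = 0$ for some $i, j \in \{0, \ldots, s-1\}$. If $i = j$ then $a_i = 0$; I would rule this out by substituting into $f(0) = 0$ (with $f(u) := P_s(u)P_{s-1}(r) - P_s(r)P_{s-1}(u)$) and checking that the explicit values of $P_k^{(\alpha,\beta)}(0)$ make the identity $P_s(0)P_{s-1}(r) = P_s(r)P_{s-1}(0)$ incompatible with $r$ being the maximal root of $|C| = L_{2s-1}(n,u)$ in the non-tight range. So assume $i \ne j$, set $a := a_i \ne 0$ and $-a = a_j$; then both $a$ and $-a$ are roots of $f$.

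The engine is the factorization (available because $\alpha - \beta = 1$)
\[
w(u) = (1-u)^{(n-1)/2}(1+u)^{(n-3)/2} = (1-u)\,(1-u^2)^{(n-3)/2},
\]
an odd factor times an even one, together with the Gauss--Radau (Levenshtein) quadrature at the nodes $a_0, \ldots, a_{s-1}$, which is exact on polynomials of degree $\le 2s - 2$ against $w(u)\,du$ with positive weights $\rho_0, \ldots, \rho_{s-1}$. I would test it on
\[
G(u) := \prod_{k \ne i}(a_k^2 - u^2),
\]
a polynomial of degree $2s - 2$ that is even in $u$. The factor $(a_j^2 - u^2) = (a^2 - u^2)$ kills both $u = a_i$ and $u = a_j$, while $G(a_k) = 0$ is built in for $k \notin \{i, j\}$, hence $\int G\,w\,du = 0$. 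The evenness of $G$ cancels the $(1-u)$ piece of $w$ (odd integrand on a symmetric interval), and after $v = u^2$ the identity becomes
\[
\int_0^1 \widetilde H(v)\,v^{-1/2}(1-v)^{(n-3)/2}\,dv = 0, \qquad \widetilde H(v) := \prod_{k \ne i}(a_k^2 - v).
\]

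In both sub-cases $i = 0$ (where $a_j = -r$ gives $a_j^2 = r^2$) and $i \ne 0$ (where $a_0 = r$ itself appears in the product), $r^2$ is a root of $\widetilde H$. Factoring $\widetilde H(v) = (r^2 - v)\widetilde H_1(v)$ with $\deg \widetilde H_1 = s - 2$, this rearranges to
\[
r^2 \int_0^1 \widetilde H_1(v)\,\mu(v)\,dv = \int_0^1 v\,\widetilde H_1(v)\,\mu(v)\,dv, \qquad \mu(v) := v^{-1/2}(1-v)^{(n-3)/2}.
\]
To finish I would identify $\widetilde H_1$ explicitly via the classical quadratic transformation relating $P_n^{((n-3)/2,\pm 1/2)}(2v-1)$ to the Gegenbauer polynomials $P_{2n}^{((n-3)/2,(n-3)/2)}(u)$ and $P_{2n+1}^{((n-3)/2,(n-3)/2)}(u)$, combined with the Christoffel--Darboux rewriting $f(u)/(u-r) = c\,K_{s-1}(u,r)$: this pins $\widetilde H_1$ up to positive scalar as a specific Jacobi polynomial of degree $s - 2$ whose roots all lie strictly inside $(0, r^2)$, where the non-tightness of $C$ enters via the characterization of $r$ as the maximal root of the adjacent Jacobi polynomial $P_{s-1}^{(\alpha,\beta+1)}$. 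The resulting sign structure of $\widetilde H_1\,\mu$ on $(0, r^2)$ versus $(r^2, 1)$, combined with a fixed-sign comparison of the two integrals above, then yields the contradiction.

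The principal obstacle is the last step: tracking the constants in the quadratic transformation and translating the placement of the roots of $\widetilde H_1$ into a strict inequality between the two integrals. A subsidiary obstacle is the auxiliary claim that the smallest Levenshtein node $a_{s-1}$ satisfies $|a_{s-1}| < r$, so that every $a_k^2$ (and hence every root of $\widetilde H_1$) really lies in $(0, r^2)$; this should follow from the asymmetry of the Jacobi weight ($\alpha > \beta$) via a standard interlacing argument for Gauss--Radau nodes, but must be verified before the sign argument can close.
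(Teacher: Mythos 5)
The paper itself offers no proof of this lemma---it is quoted wholesale from \cite{BDL99}---so your sketch has to stand on its own, and as it stands it is not a proof. The first half is sound and genuinely useful: the interpolatory quadrature at the $s$ roots of \eqref{lev-equ} is exact on polynomials of degree at most $2s-2$ against $(1-u)(1-u^2)^{(n-3)/2}\,du$ with positive weights in the non-tight range, your even test polynomial $G(u)=\prod_{k\ne i}(a_k^2-u^2)$ does vanish at every node once $a_i+a_j=0$, and the resulting identity $\int_0^1\widetilde H(v)\,v^{-1/2}(1-v)^{(n-3)/2}\,dv=0$ is correct. But that identity is nowhere near a contradiction: a polynomial of degree $s-1$ with all roots real and in $[0,1)$ can perfectly well be orthogonal to constants against a positive weight. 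All of the actual content therefore sits in your last step, which you explicitly leave as an ``obstacle'' rather than carry out (the same is true, less importantly, of the $i=j$ case, which is only a plan to ``check'' an incompatibility).

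Worse, the route you indicate for closing the argument rests on a false premise. The auxiliary claim that $|a_{s-1}|<r$, hence that every root of $\widetilde H_1$ lies in $(0,r^2)$, fails for genuine Levenshtein node systems: for the non-tight $891$-point $3$-distance $5$-design in $\mathbb{S}^{21}$, Theorem \ref{max-l-general} gives the roots of \eqref{lev-equ} as $1/4,\,-1/8,\,-1/2$, so $r=1/4$ while $|a_{2}|=1/2>r$; no interlacing argument will rescue this. Even restricted to the contradiction hypothesis, the localization is automatic only when the zero-sum pair is exactly $\{a_0,a_{s-1}\}$: if the pair is $\{a_0,a_j\}$ with $j<s-1$ then $a_{s-1}<-r$ forces a root of $\widetilde H_1$ beyond $r^2$, and when the pair avoids $a_0$ nothing prevents $a_{s-1}^2>r^2$ either. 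The proposed identification of $\widetilde H_1$ ``up to positive scalar as a specific Jacobi polynomial of degree $s-2$'' is likewise only asserted; its roots are the prescribed numbers $a_k^2$, and no computation is offered showing these coincide with the zeros of any classical polynomial. So the decisive sign comparison is missing and the scaffolding meant to support it does not hold up; a genuinely different mechanism is needed, which is presumably why the paper simply cites \cite{BDL99}.
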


This gives an alternative proof of the main result in the case $t=2s-1 \geq 3$. 

We present some different formulations. Our rationality result 
implies the following characterization of the spherical codes which attain the Levenshtein bounds.  

\begin{corollary} \label{lev-b-attaining}
All codes that attain Levenshtein bounds $L_t(n,u)$ for $t \geq 3$, 
apart from the icosahedron, have rational inner products.
\end{corollary}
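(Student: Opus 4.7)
The plan is to reduce the corollary to the main rationality theorem by identifying Levenshtein-bound-attaining codes with Delsarte codes. The starting point, recalled in the introduction after Levenshtein \cite{Lev92}, is that a spherical code $C \subset \mathbb{S}^{n-1}$ attains $L_t(n,u)$ precisely when $C$ is a spherical $s$-distance $t$-design with $t = 2s-1$, or $t = 2s-2$ and $C$ diametrical. In particular $t \geq 2s-2$, so the hypotheses of Theorems \ref{thm:s>5}, \ref{s=4,5}, and \ref{thm:3-5} are available whenever $s \geq 3$.

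A direct parity check shows that $t \geq 4$ forces $s \geq 3$ under either of the two admissible equalities $t = 2s-1$ and $t = 2s-2$. Hence for $t \geq 4$ the main result applies verbatim and yields rational inner products, with the icosahedron --- arising as the exceptional case $s = 3$, $t = 5$, $n = 3$ of Theorem \ref{thm:3-5}, and indeed attaining $L_5(3, u)$ --- being the only exception.

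The single remaining case is $t = 3$, which forces $s = 2$ since $t = 2s-2$ has no integer solution. Here $C$ is a spherical $2$-distance $3$-design. If $C$ is tight, then $C$ is a cross-polytope and its inner products $\{-1,0\}$ are rational. If $C$ is not tight, the alternative argument opening this section --- combining Theorem \ref{max-l-general}, Besicovitch's theorem, and the Lemma of \cite{BDL99} --- is valid for $t = 2s-1 \geq 3$ and produces the rationality of the two inner products of $C$.

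The main obstacle, really one of bookkeeping rather than of mathematics, is confirming that the endpoint $(s,t) = (2,3)$ --- which falls outside the scope of the $s \geq 3$ main theorems of Sections 2 and 3 --- is covered; the alternative argument of Section 4 closes exactly this gap, so concatenating the two yields the corollary in full generality.
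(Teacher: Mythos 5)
Your overall strategy---translate ``attains $L_t(n,u)$'' into the Delsarte-code condition, apply the main theorems for $s\geq 3$, and close the $(s,t)=(2,3)$ endpoint separately---is exactly the route the paper intends: the paper itself writes only ``Our rationality result implies\dots'' and sketches the same splitting in the remark following the corollary. Your $t=3$ branch is fine: tight $3$-designs are cross-polytopes, and for non-tight $2$-distance $3$-designs the discussion around Theorem~\ref{max-l-general} is asserted by the paper to cover all $t=2s-1\geq 3$.

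The one concrete flaw is the ``parity check.'' Levenshtein's criterion is an inequality, not an equality: a code attains the bound when $t\geq 2s-1$, or $t\geq 2s-2$ and the code is diametrical, and by Delsarte--Goethals--Seidel the value $t=2s$ (a tight $2s$-design) is also possible. Hence ``$t\geq 4$ forces $s\geq 3$'' is false: a tight $4$-design is a \emph{two}-distance $4$-design, and by the remark following Corollary~\ref{lev-b-attaining} (quoting \cite{BDL99}) the tight $4$-designs are exactly the codes attaining $L_4(n,u)$. For these codes your appeal to Theorems~\ref{thm:s>5}, \ref{s=4,5} and \ref{thm:3-5} is unavailable, since all three require $s\geq 3$. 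The omission is harmless but must be closed explicitly: tight $4$-designs have rational inner products by Bannai--Damerell \cite{BD1} (rationality of the inner products of tight designs, as recalled in the introduction), and tight $2m$-designs do not exist for $m\geq 3$, so no further $t=2s$ cases arise for $t\geq 3$. Equivalently, you can dispose of all even $t\geq 4$ at once the way the paper's own remark does, by citing \cite{BDL99} for the fact that the even bounds are attained only by tight $4$-designs.
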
 

\begin{remark} We may have an alternative proof here as well. Indeed, 
there exist no codes attaining the even bounds $L_{2s}(n,u)$ for $s \geq 2$
apart from the tight 4-designs \cite{BDL99}, while 
for odd $t=2s-1$ the claim follows from the above discussion. 
\end{remark}

Cohn--Kumar \cite{CK07} called {\it sharp configurations} all spherical
$s$-distance $(2s-1)$-designs. Therefore we may reformulate 
Corollary \ref{lev-b-attaining} as follows. 

\begin{corollary}
All sharp configurations apart from the icosahedron have rational inner products. 
\end{corollary}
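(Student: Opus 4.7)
The plan is to recognize that this corollary is a direct repackaging of Corollary \ref{lev-b-attaining}. By Cohn--Kumar's convention (recalled immediately before the statement), a sharp configuration is by definition a spherical $s$-distance $(2s-1)$-design, and by Levenshtein's classical maximality theorem recalled in the introduction, every such code attains the Levenshtein bound $L_{2s-1}(n,|C|)$. Hence whenever $s\geq 2$ (so that $t=2s-1\geq 3$), the present corollary follows immediately from Corollary \ref{lev-b-attaining}, with the icosahedron inherited as the sole exception.

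For a self-contained route that bypasses Corollary \ref{lev-b-attaining}, I would split on $s$. For $s\geq 6$, Theorem \ref{thm:s>5} applies directly since $t=2s-1\geq 2s-2$. For $s\in\{4,5\}$, Theorem \ref{s=4,5} gives the conclusion. For $s=3$, Theorem \ref{thm:3-5} yields rationality with the icosahedron as the unique exception. For $s=2$, a sharp configuration is a $2$-distance $3$-design; here Theorem \ref{max-l-general} identifies the two nontrivial inner products with the roots of the quadratic Levenshtein polynomial, and the \cite{BDL99} lemma combined with Besicovitch's theorem forces both to be rational, as in the alternative argument sketched at the start of Section~4. Finally, if one wishes to include $s=1$, a $1$-distance $1$-design is a regular simplex with inner product $-1/(|C|-1)$, trivially rational.

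The substantive work was already carried out in Sections~2--4, so no new obstacle arises here. The only point requiring attention is bookkeeping: one must faithfully propagate the icosahedron exception from Theorem \ref{thm:3-5} through the chain of implications, and verify that the low-$s$ cases fit within Cohn--Kumar's definition of sharp configuration.
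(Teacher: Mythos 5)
Your first paragraph is exactly the paper's argument: the paper derives this corollary as a direct reformulation of Corollary \ref{lev-b-attaining}, using Cohn--Kumar's definition of sharp configurations as $s$-distance $(2s-1)$-designs together with Levenshtein's maximality theorem. Your additional self-contained case split on $s$ is consistent with how those earlier results are themselves proved in Sections 2--4, so the proposal is correct and takes essentially the same approach.
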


\begin{remark} \label{rem:bannai}
After we finished the first version of this manuscript,  
Bannai \cite{Bpc} suggested to write a simple proof of the rationality for $t=2s-1$ using only the parameters of the corresponding $Q$-polynomial association schemes. 
His idea is the use of the equality $m_s=m_{s-1} b_{s-1}^*/ c_{s}^*$ \cite[Ch.\ II.\ Proposition 3.7 (vi)]{BIb}. 
Under our assumption, we can obtain $m_{s-1}=h_{s-1}$, 
$b_{s-1}^*=n-n(s-1)/(n+2(s-1)-2)$ \cite[Theorem 3.1]{S11}, and  $c_{s}^* \leq n$, where $n$ is the dimension of the sphere $\mathbb{S}^{n-1}$. 
If the set $X\subset \mathbb{S}^{n-1}$ has an irrational inner product, then $m_s=n$ holds, which implies
\[
n=m_s=m_{s-1} \frac{b_{s-1}^*}{ c_{s}^*} \geq h_{s-1}\left(1-\frac{s-1}{n+2(s-1)-2} \right).
\]
However, this inequality is not satisfied for $s=3$, $n>3$, or $s=4,5$, $n \geq 3$. 
This method might be not applicable for $t=2s-2$ since $b_{s-1}^*$ depends on $a_{s-1}^*$ which is not determined. 
\end{remark}

\bigskip

\noindent
\textbf{Acknowledgments.} 
The authors thank Eiichi Bannai for providing the method to prove the rationality of inner products for $t=2s-1$ 
as explained in Remark~\ref{rem:bannai}.

\end{document}